\newcommand{\KK}{\mathbb{K}}
\newcommand{\LL}{\mathbb{L}}
\newcommand{\CC}{\mathbb{C}}
\newcommand{\PP}{\mathbb{P}}
\newcommand{\QQ}{\mathbb{Q}}
\newcommand{\RR}{\mathbb{R}}
\newcommand{\rk}{\operatorname{rk}}
\newcommand{\Cl}{\operatorname{Cl}}
\newcommand{\Pic }{\operatorname{Pic}}
\newcommand{\Aut}{\operatorname{Aut}}
\newcommand{\Bir}{\operatorname{Bir}}
\newcommand{\Gal}{\operatorname{Gal}}
\newcommand{\Spec}{\operatorname{Spec}}
\DeclareSymbolFont{cyrletters}{OT2}{wncyr}{m}{n}
\DeclareMathSymbol{\Sha}{\mathalpha}{cyrletters}{"58}
\newcommand{\comp}\circ
\theoremstyle{plain}
\newtheorem{theorem}[subsection]{Theorem}
\newtheorem{lemma}[subsection]{Lemma}
\newtheorem{proposition}[subsection]{Proposition}
\newtheorem{corollary}[subsection]{Corollary}
\newtheorem*{claim*}{Claim}
\theoremstyle{definition}
\newtheorem{definition}[subsection]{Definition}
\newtheorem*{definition*}{Definition}
\newtheorem{example-remark}[subsection]{Remark-Example}
\newtheorem{subexample-remark}[equation]{Remark-Example}
\newtheorem*{notation*}{Notation}
\newtheorem{remark}[subsection]{Remark}
\newcounter{NN}
\newcounter{NO}
\newcommand{\Address}{{
  \bigskip
  \footnotesize

  A. Avilov, \textsc{National Research University Higher School of Economics, AG Laboratory, HSE, 6 Usacheva str., Moscow, Russia, 119048.}\par\nopagebreak
  \textit{E-mail}: \texttt{v07ulias@gmail.com}

}}
\begin{document}
\title{On forms of the Segre cubic}
\author{Artem Avilov}
\maketitle
\begin{abstract}
In this article we study forms of the Segre cubic over non-algebraically closed fields, their automorphism groups and equivariant birational rigidity. In particular, we show that all forms of the Segre cubic are cubic hypersurfaces and all of them have a point.
\end{abstract}
\section{Introduction}

The Segre cubic is a classical three-dimensional variety with many interesting properties. For example, it is a compactification of the moduli space of configurations of six points on the projective line (see~\cite[\S 2]{Dol}) and its dual variety, called the Igusa quartic, is a compactification of the moduli space of certain abelian surfaces (see~\cite[Theorem 2]{Muk}). Birational geometry of the Segre cubic was extensively studied, for example, its small resolutions were described (see~\cite{Fin}).

The aim of this article to study equivariant birational rigidity of forms of the Segre cubic. Equivariant birational rigidity of the Segre cubic itself over algebraically closed field of characteristic zero was studied by the author in the paper~\cite{Avi}. We show that for every field of characteristic zero there is only one form of the Segre cubic over this field which is $G$-birationally rigid (see Definition~\ref{def3}), and only for the following groups: $S_{6}$, $A_{6}$, $S_{5}$ and $A_{5}$, where $S_{6}$ is the full automorphism group and groups $S_{5}$ and $A_{5}$ are embedded into $S_{6}$ in the standard way (see Definition~\ref{def57}). Moreover, in these cases the form of the Segre cubic is $G$-birationally superrigid. These results can be useful, for example, for classification of finite subgroups of three-dimensional Cremona groups over fields of characteristic zero. We wxpect that these results are valid also over fields of characteristic~\mbox{$p>5$}. Also we prove that all forms of the Segre cubic have a point defined over the basic field, and all of them are cubic hypersurfaces. Special attention is given to the case of the field of real numbers.

In this article we use the following notation for groups: by $C_{n}$ we denote the cyclic group of order $n$; by $D_{2n}$ we denote the dihedral group of order $2n$; by $S_{n}$ we denote the symmetric group of rank~$n$; by $A_{n}$ we denote the alternating group of rank $n$. For an arbitrary field $\KK$ by $\KK^{\operatorname{sep}}$ we denote its separable closure.

This work is supported by the Russian Science Foundation under grant No 18-11-00121. The author is a Young Russian Mathematics award winner and would like to thank its sponsors and jury. The author would like to thank S. Gorchinskiy, C. Shramov and A. Trepalin for useful discussions and comments.
\section{Biregular geometry of the forms of the Segre cubic}
\begin{definition} \emph{The Segre cubic} over the field $\KK$ of characteristic zero or~\mbox{$p\geqslant 5$} is a three-dimensional variety $\mathcal{S}_{\KK}$, which can be explicitely given by the following system of equations in $\PP^{5}_{\KK}$:
\begin{equation}\label{eq}\sum\limits_{i=1}^{6}x_{i}=\sum\limits_{i=1}^{6}x_{i}^{3}=0.
\end{equation}
Usually, if it doesn't lead to misunderstanding, we will omit the index and denote the Segre cubic by $\mathcal{S}$. We will call a variety $X$ defined over an arbitrary field $\KK$ of characteristic zero or $p\geqslant 5$ a \emph{form of the Segre cubic} if $X_{\KK^{\operatorname{sep}}}=X\otimes_{\KK} \Spec\KK^{\operatorname{sep}}$ is isomorphic to the Segre cubic over the field $\KK^{\operatorname{sep}}$. If the field $\KK$ is the field of real numbers $\RR$, we will call the variety $X$ a \emph{real form of the Segre cubic}.
\end{definition}
\begin{remark} Note that equations~\eqref{eq} make sence over an arbitrary field, so there is at least one form of the Segre cubic over an arbitrary field.
\end{remark}
We will use the following well-known facts about the Segre cubic (see, for example,~\cite[\S 2]{Dol}):
\begin{itemize}
\item the automorphism group $\Aut(\mathcal{S})$ is isomorphic to $S_{6}$ and acts by permutations of standard coordinates;
\item the singular set of the Segre cubic $\mathcal{S}$ consists of 10 ordinary double points, all of them form an $\Aut(\mathcal{S})$-orbit and one of them has coordinates $(1:1:1:-1:-1:-1)$;
\item the variety $\mathcal{S}$ contains exactly 15 planes which form an $\Aut(\mathcal{S})$-orbit, one of them can be given by equations $x_{1}+x_{2}=x_{3}+x_{4}=x_{5}+x_{6}=0$ in standard coordinates;
\item every singular point of $\mathcal{S}$ lies on 6 planes and every plane on $\mathcal{S}$ contains exactly 4 singular points. In other words, singular points and planes form a $(10_{6}, 15_{4})$-configuration in notation of~\cite{Dol2}. The automorphism group of such a configuration is isomorphic to $S_{6}$ and is induced by the automorphism group~$\Aut(\mathcal{S})$ (see, for example,~\cite[Lemma 2.1]{Avi2}).
\end{itemize}
\begin{definition}\label{def57} We will say that a subgroup $A_{5}\subset\Aut(\mathcal{S})$ or $S_{5}\subset\Aut(\mathcal{S})$ is \emph{standard} if it preserves some hyperplane $\{x_{i}=0\}$.
\end{definition}
In the sequel we will need the following easy facts about elements of order 2 and certain subgroups of the group $S_{6}$.
\begin{lemma}\label{le1}The element $(1\ 2)$ acting on the $(10_{6}, 15_{4})$-configuration has exactly $4$ fixed singular points and $3$ invariant planes and its centralizer is isomorphic to~$C_{2}\times S_{4}$. The element $(1\ 2)(3\ 4)$ acting on the $(10_{6}, 15_{4})$-configuration has exactly $2$ fixed singular points and $3$ invariant planes and its centralizer is isomorphic to $C_{2}\times (C_{2}^{2}\rtimes C_{2})\simeq C_{2}\times D_{8}$. The element $(1\ 2)(3\ 4)(5\ 6)$ acting on the $(10_{6}, 15_{4})$-configuration has exactly $4$ fixed singular points and $7$ invariant planes and its centralizer is isomorphic to $C_{2}^{3}\rtimes S_{3}$. The stabilizer of a singular point is isomorphic to $S_{3}^{2}\rtimes C_{2}$, the stabilizer of a plane is isomorphic to $S_{4}\times C_{2}$. A non-standard subgroup $S_{5}$ acts on the set of planes with two orbits of length $5$ and $10$ respectively.
\end{lemma}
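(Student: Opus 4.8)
The plan is to translate every assertion into combinatorics of $S_{6}$ acting on the $(10_{6},15_{4})$-configuration, via the explicit model recalled above. The first step is to fix two $S_{6}$-equivariant bijections: a singular point corresponds to an unordered partition of $\{1,\dots,6\}$ into two triples (there are $\binom{6}{3}/2=10$ of these, with $(1:1:1:-1:-1:-1)$ giving $\{1,2,3\}\sqcup\{4,5,6\}$), a plane corresponds to a partition of $\{1,\dots,6\}$ into three pairs (there are $5\cdot3\cdot1=15$ of these, with $x_{1}+x_{2}=x_{3}+x_{4}=x_{5}+x_{6}=0$ giving $\{1,2\}\sqcup\{3,4\}\sqcup\{5,6\}$), and a point lies on a plane exactly when the pair-partition is a transversal of the triple-partition. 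All of this is checked on the stated representatives and then spread by equivariance; it reproduces the incidence numbers $6$ and $4$, and since $|S_{6}|=720=72\cdot10=48\cdot15$, the point- and plane-stabilizers have orders $72$ and $48$.

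Next, the fixed-point counts are done orbit-by-orbit on $\{1,\dots,6\}$. An involution $g$ fixes a triple-partition $\{A,B\}$ iff $g(A)\in\{A,B\}$, and $g(A)=B$ is impossible (because $g$ has a fixed letter for $(1\,2)$ and for $(1\,2)(3\,4)$, and by a parity argument for $(1\,2)(3\,4)(5\,6)$); so one counts the $3$-subsets that are unions of $g$-orbits, obtaining $4$, $2$, $4$ fixed singular points. Likewise $g$ fixes a pair-partition $M$ iff $g$ permutes its three pairs; since $g^{2}=1$ the pair through a moved letter either has the form $\{i,g(i)\}$ or is swapped with another, and a short enumeration yields $3$, $3$, $7$ invariant planes. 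The centralizers are then immediate: $C_{S_{6}}((1\,2))=\langle(1\,2)\rangle\times S_{\{3,4,5,6\}}\cong C_{2}\times S_{4}$; $C_{S_{6}}((1\,2)(3\,4))=\langle(5\,6)\rangle\times\langle(1\,2),(3\,4),(1\,3)(2\,4)\rangle\cong C_{2}\times D_{8}$; $C_{S_{6}}((1\,2)(3\,4)(5\,6))\cong C_{2}\wr S_{3}=C_{2}^{3}\rtimes S_{3}$; of orders $48$, $16$, $48$, in agreement with the conjugacy-class sizes $15$, $45$, $15$. The stabilizer of $\{A,B\}$ is $(S_{A}\times S_{B})\rtimes C_{2}\cong S_{3}^{2}\rtimes C_{2}$, and the stabilizer of a pair-partition is $S_{2}\wr S_{3}=C_{2}^{3}\rtimes S_{3}\cong S_{4}\times C_{2}$, the last isomorphism holding because over $\FF_{2}$ the permutation module $C_{2}^{3}$ splits as the central diagonal $C_{2}$ plus the sum-zero submodule $C_{2}^{2}$, and $C_{2}^{2}\rtimes S_{3}\cong S_{4}$.

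For the last assertion I would first observe that a subgroup $S_{5}\subset S_{6}$ is non-standard, i.e.\ preserves no hyperplane $\{x_{i}=0\}$, exactly when it fixes no letter, which happens exactly when it is transitive on $\{1,\dots,6\}$ (a faithful intransitive action of $S_{5}$ on six letters must have a fixed letter, as $S_{5}$ has no transitive constituent of degree $3$ or $4$). Then I would identify the $15$ planes with the $15$ pairs of synthematic totals of $\{1,\dots,6\}$: there are six such totals, permuted by $S_{6}$, each pair-partition lies in exactly two of them, and two totals share exactly one pair-partition, so ``planes'' $\cong$ ``edges of $K_{6}$ on the six totals'' as $S_{6}$-sets. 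A transitive $S_{5}\subset S_{6}$ acts on the six totals as $S_{5}$ on a six-element set with a fixed point --- this is exactly the content of the outer automorphism of $S_{6}$, which swaps the two conjugacy classes of $S_{5}$ and swaps ``letters'' with ``totals'' --- so on the edges it has precisely the two orbits ``edges within the five moved totals'', of size $\binom{5}{2}=10$, and ``edges through the fixed total'', of size $5$. Alternatively one realizes a transitive $S_{5}$ as $\mathrm{PGL}_{2}(\FF_{5})$ on $\PP^{1}(\FF_{5})$ and computes the two orbits by hand.

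There is no conceptual difficulty; the work is bookkeeping. The one spot where care is needed is the count of triple-partitions fixed by $(1\,2)(3\,4)$: the four admissible $3$-subsets pair up into only two unordered partitions, so a naive count overshoots to $4$. The last assertion is cleanest if one invokes the exceptional outer automorphism of $S_{6}$; otherwise one sets up an explicit transitive copy of $S_{5}$ and computes directly.
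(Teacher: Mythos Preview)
Your approach is exactly what the paper leaves implicit when it writes ``Simple direct computations'': identify singular points with $3{+}3$ partitions and planes with $2{+}2{+}2$ partitions of $\{1,\dots,6\}$, and then read everything off. Almost all of it is correct, including the centralizers, the two stabilizers, the isomorphism $C_{2}\wr S_{3}\cong S_{4}\times C_{2}$ via the splitting $C_{2}^{3}\cong C_{2}\oplus V_{4}$ of the permutation module, and the argument via synthematic totals for the non-standard $S_{5}$.

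There is, however, a genuine slip in the count of singular points fixed by $g=(1\ 2)(3\ 4)(5\ 6)$. You assert that $g(A)=B$ is impossible ``by a parity argument'' and then claim to obtain $4$ by counting $3$-subsets $A$ that are unions of $g$-orbits. But every $g$-orbit has size $2$, so no $3$-subset is a union of $g$-orbits; your stated method yields $0$, not $4$. In fact the fixed triple-partitions for this $g$ arise \emph{only} from the case $g(A)=B$: one needs $A$ to be a transversal of the three $2$-cycles, there are $2^{3}=8$ such transversals, and they pair into $4$ unordered partitions $\{A,B\}$ (for instance $\{1,3,5\}\sqcup\{2,4,6\}$). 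So the final number $4$ is right, but the mechanism is the opposite of what you wrote. The ``parity'' obstruction runs the other way: it is $g(A)=A$ that is impossible here, because a $g$-invariant subset must have even cardinality.

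With that correction your write-up is a faithful expansion of the paper's one-line proof.
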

\begin{proof} Simple direct computations.
\end{proof}
\begin{corollary}\label{cor1} Every real form of the Segre cubic has a singular point defined over the base field $\RR$.
\end{corollary}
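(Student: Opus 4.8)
The plan is to run Galois descent and thereby reduce the statement to the combinatorics of the $(10_6,15_4)$-configuration, which Lemma~\ref{le1} already controls. Let $X$ be a real form of the Segre cubic. By definition there is an isomorphism $\phi\colon X_{\CC}\to\mathcal{S}_{\CC}$ over $\CC=\RR^{\operatorname{sep}}$. Transporting the natural action of complex conjugation on $X_{\CC}$ through $\phi$ gives an antiregular involution $\theta=\phi\circ(\,\overline{\phantom{x}}\,)\circ\phi^{-1}$ of $\mathcal{S}_{\CC}$, and by descent a closed point of $\mathcal{S}_{\CC}$ comes from a point of $X$ with residue field $\RR$ exactly when it is $\theta$-invariant (a Galois-fixed closed point of $X_{\CC}$ cannot lie over a closed point of $X$ with residue field $\CC$). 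Since $\phi$ is an isomorphism, $\theta$ preserves $\Sing(\mathcal{S}_{\CC})$, so it suffices to show that $\theta$ fixes one of the ten nodes of $\mathcal{S}$.

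Next I would write $\theta=g\circ\iota$, where $\iota$ is the standard coordinatewise complex conjugation on $\mathcal{S}_{\CC}\subset\PP^{5}_{\CC}$ coming from the $\QQ$-model~\eqref{eq}, and $g\in\Aut(\mathcal{S}_{\CC})\cong S_{6}$. Because $\Aut(\mathcal{S}_{\CC})$ acts by permutations of the standard coordinates, it is defined over $\QQ$ and in particular commutes with $\iota$; hence $\operatorname{id}=\theta^{2}=g^{2}$, so $g$ is the identity or an involution. On the other hand, the ten nodes of $\mathcal{S}$ are precisely the points whose homogeneous coordinates are the permutations of $(1:1:1:-1:-1:-1)$, so each of them is $\iota$-fixed; therefore $\theta$ acts on the set of nodes, and more generally on the $(10_{6},15_{4})$-configuration, simply through the permutation $g\in S_{6}$.

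Finally, up to conjugacy $g$ is one of $\operatorname{id}$, $(1\ 2)$, $(1\ 2)(3\ 4)$, $(1\ 2)(3\ 4)(5\ 6)$, and the number of fixed nodes is a conjugacy invariant; by Lemma~\ref{le1} these elements fix $10$, $4$, $2$ and $4$ singular points of the configuration respectively, in every case at least one. So $\theta$ has a fixed node $p$, and $\phi^{-1}(p)$ is a Galois-fixed closed point of $X_{\CC}$, i.e.\ a singular point of $X$ defined over $\RR$. The only delicate point is the descent bookkeeping at the start — verifying that $g^{2}=\operatorname{id}$ and that the nodes are $\iota$-fixed so that $\theta$ really acts on them as $g$ — after which the conclusion is immediate from Lemma~\ref{le1}; there is no serious obstacle beyond setting up this dictionary correctly.
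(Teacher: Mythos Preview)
Your proof is correct and follows essentially the same approach as the paper's own proof: complex conjugation induces an element of order at most $2$ in the automorphism group $S_{6}$ of the $(10_{6},15_{4})$-configuration, and Lemma~\ref{le1} then guarantees a fixed node, which descends to an $\RR$-point. The paper compresses this into three sentences, while you spell out the Galois-descent bookkeeping (the antiregular involution $\theta=g\circ\iota$, the verification $g^{2}=\operatorname{id}$, and that the nodes are $\iota$-fixed so $\theta$ acts on them through $g$); this extra detail is all correct but not strictly required.
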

\begin{proof} Let $X$ be a real form of the Segre cubic. Consider an action of the complex conjugation $\sigma$ on the $(10_{6}, 15_{4})$-configuration of singular points and planes on the Segre cubic $X_{\CC}$. Since $\sigma$ acts as an element of order~2 or 1, by Lemma~\ref{le1} we have a $\sigma$-invariant singular point.
\end{proof}
For an arbitrary field $\KK$ we have the following assertion.
\begin{lemma}\label{le6} A form $X$ of the Segre cubic over the field $\KK$ contains a $\KK$-point if and only if the variety $X$ is isomorphic to a cubic hypersurface in $\PP^{4}_{\KK}$.
\end{lemma}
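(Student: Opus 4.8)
The plan is to attach to $X$ a canonical Severi--Brauer variety and to reduce both implications to the question of whether it is split. Over $\KK^{\mathrm{sep}}$ the Segre cubic is embedded in $\PP^{4}_{\KK^{\mathrm{sep}}}$ by the complete linear system $|\OOO_{\mathcal{S}}(1)|$; this is intrinsic (it is the ample generator of $\Pic(\mathcal{S}_{\KK^{\mathrm{sep}}})$, equivalently $-\tfrac12 K_{\mathcal{S}}$), hence fixed by $\Aut(\mathcal{S})=S_{6}$, which moreover acts on $\PP^{4}$ linearly. Consequently, for an arbitrary form $X$ the class $\OOO(1)\in\Pic(X_{\KK^{\mathrm{sep}}})$ is $\Gal(\KK^{\mathrm{sep}}/\KK)$-invariant, and the $S_{6}$-equivariant embedding $\mathcal{S}\hookrightarrow\PP^{4}$ descends to a closed embedding $\iota\colon X\hookrightarrow P$ defined over $\KK$, where $P$ is a Severi--Brauer fourfold ($P_{\KK^{\mathrm{sep}}}\cong\PP^{4}$); equivalently, $P$ is the twist of $\PP^{4}$ by the cocycle defining $X$ under $S_{6}\hookrightarrow\Aut(\PP^{4}_{\KK^{\mathrm{sep}}})$. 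Since $\mathcal{S}\subset\PP^{4}$ is a cubic hypersurface, $\iota(X)\subset P$ is an effective divisor whose geometric class is three times the generator of $\Pic(P_{\KK^{\mathrm{sep}}})=\ZZ$.

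The forward implication is then immediate: if $X$ has a $\KK$-point $x$, then $\iota(x)\in P(\KK)$, so by Ch\^atelet's theorem $P\cong\PP^{4}_{\KK}$, and therefore $X\cong\iota(X)$ is a genuine cubic hypersurface in $\PP^{4}_{\KK}$.

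For the converse, assume $X$ is isomorphic to a cubic hypersurface $V\subset\PP^{4}_{\KK}$. Then $V$ is embedded in $\PP^{4}_{\KK}$ by its own intrinsic linear system $|\OOO_{V}(1)|$, so that embedding is $\KK$-isomorphic to $\iota$ and hence $P\cong\PP^{4}_{\KK}$ is split; but that alone only produces a $\KK$-point of $P$, not of $V$, so one still has to exhibit a rational point on $V$. Here I would use that $V$ is singular: $\Sing(V)$ is a $\KK$-subscheme which over $\KK^{\mathrm{sep}}$ is the set of the $10$ nodes of $\mathcal{S}$, on which $\Gal(\KK^{\mathrm{sep}}/\KK)$ acts through its image $G\subseteq S_{6}$. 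Because $10\not\equiv 0\pmod 3$, some $G$-orbit of nodes has length $d$ prime to $3$, and this gives a closed point of $V$ of degree $d$ coprime to $3$. The geometric input is that any two distinct nodes of $\mathcal{S}$ are joined by a line lying on $\mathcal{S}$: two nodes correspond to partitions of $\{1,\dots,6\}$ into triples differing by a transposition, and one checks directly on \eqref{eq} that, for instance, the line $(1:1:1-2t:-1+2t:-1:-1)$ through the nodes $(1:1:1:-1:-1:-1)$ and $(1:1:-1:1:-1:-1)$ lies on $\mathcal{S}$. Thus the $\KK$-rational linear span of the chosen orbit of nodes meets $V$ in a configuration of secant lines, and a secant/tangent construction on the cubic $V$ --- in the simplest case, the residual third point of $V$ on the $\KK$-rational line through a conjugate pair of nodes --- together with the coprimality of $d$ to $3$ should let one descend to a $\KK$-rational point of $V$.

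The main obstacle is exactly this last step: guaranteeing a genuine $\KK$-point rather than merely points over proper extensions, precisely in the cases where the Galois image $G$ is large (e.g.\ $G=A_{6}$ or $G=S_{6}$), so that no node, line or plane of $X$ is individually defined over $\KK$ and none of the canonical $\Gal$-stable configurations helps. Overcoming this requires using the cubic-hypersurface structure of $V$ in an essential way --- to run the secant constructions --- while keeping careful track of the divisibility by $3$ coming from the $10$-node configuration; for small $G$ (for instance $G$ contained in the stabilizer of a node or of a plane, or in a copy of $S_{5}$ or $A_{5}$) one instead gets a $\KK$-point directly from a rational node, or from the rational line joining a Galois-stable pair of nodes, or from a rational plane.
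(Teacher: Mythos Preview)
Your forward implication is correct and is essentially the paper's argument in different clothing. The paper observes that if $X(\KK)\neq\emptyset$ then $\Pic(X)=\Pic(X_{\KK^{\mathrm{sep}}})^{\Gal(\KK^{\mathrm{sep}}/\KK)}$, so the class $-\tfrac12 K_X$ descends to $\KK$ and its linear system embeds $X$ as a cubic in $\PP^4_\KK$. Your Severi--Brauer formulation is the Brauer-obstruction version of the same statement: the class of your $P$ in $\Br(\KK)$ is exactly the obstruction to descending $\OOO(1)$, and it vanishes on a rational point by Ch\^atelet. So no disagreement here.

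The converse, however, is left genuinely incomplete in your proposal, as you yourself flag. The paper does not attempt any secant/tangent construction at this point: it simply invokes \cite[Proposition~3.2]{Cor2}, which already furnishes a $\KK$-point on a cubic hypersurface in $\PP^4_\KK$. Thus the ``main obstacle'' you identify is disposed of by a one-line citation, not by an argument internal to the node configuration of the Segre cubic.

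Your partial analysis is not wasted. When the Galois image fixes a node, or has an orbit of two nodes, you do get a $\KK$-point directly, and your line-through-two-nodes observation is valid for the most elementary of reasons: any line through two singular points of a cubic hypersurface meets it with multiplicity at least $4$, hence lies on it, so that line is a $\PP^1_\KK\subset X$. These cases cover in particular all the real forms and give an alternative route to Corollary~\ref{cor1}. But once the Galois image is large enough that every node orbit has length $\geq 3$ --- e.g.\ image $A_6$ or $S_6$, where the ten nodes form a single orbit --- your sketch produces only a zero-cycle of degree prime to $3$, and turning that into an honest $\KK$-point is precisely the content of Coray's theorem. Completing your argument in those cases would amount to reproving (a special case of) that result; the cleanest fix is to cite it, as the paper does.
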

\begin{proof} Assume that the variety $X$ has a point defined over the field $\KK$. Then the group $\Pic(X_{\KK^{\operatorname{sep}}})^{\operatorname{Gal}(\KK^{\operatorname{sep}}/\KK)}$ coincides with its subgroup $\Pic(X)$ (see, for example,~\mbox{\cite[Theorem 9.1]{Pro}}). In particular, the divisor class $-\frac{1}{2}K_{X}$ is defined over $\KK$ and its linear system induces an embedding of $X$ into $\PP^{4}_{\KK}$ as a cubic hypersurface. Due to~\cite[Proposition 3.2]{Cor2} the converse statement is also true.
\end{proof}
As a consequence, every real form of the Segre cubic is a cubic itself. Let us show, that analogous result is valid over an arbitrary field of characteristic zero or $p\geqslant 5$. For this purpose we need the following lemma.
\begin{lemma}\label{le11}\textnormal{(cf.~\cite[Lemma 2.3]{Cor2})} Let $\KK$ be an arbitrary field of characteristic different from $2$, and let $\LL/\KK$ be a composite of quadratic extensions of the field $\KK$. Let $X$ be a variety over the field $\KK$ such that $X_{\LL}$ is isomorphic to a cubic in $\PP^{4}_{\LL}$. Suppose that $X_{\LL}$ has an $\LL$-point. Then $X$ has a $\KK$-point.
\end{lemma}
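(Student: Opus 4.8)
The plan is to reduce to the case of a single quadratic extension and then argue by descent together with a counting argument on the $10$ singular points of the cubic. Write $\LL/\KK$ as a tower $\KK = \KK_{0} \subset \KK_{1} \subset \cdots \subset \KK_{n} = \LL$ with each $\KK_{i+1}/\KK_{i}$ quadratic (or trivial). By induction on $n$ it suffices to prove the following one-step assertion: if $\KK'/\KK$ is quadratic, $X_{\KK'}$ is a cubic threefold in $\PP^{4}_{\KK'}$ with a $\KK'$-point, then $X$ has a $\KK$-point. So assume from now on that $\LL = \KK'$ is quadratic over $\KK$, with $\Gal(\KK'/\KK) = \langle\tau\rangle$.

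The key geometric input is that $X_{\KK^{\operatorname{sep}}}$ is a form of the Segre cubic, hence has exactly $10$ ordinary double points, and this set $\Sigma$ of $10$ points is defined over $\KK$ (it is the singular locus, which is Galois-stable). The Galois group $\Gamma = \Gal(\KK^{\operatorname{sep}}/\KK)$ acts on $\Sigma$ through $\Aut(\mathcal{S}) = S_{6}$ acting on the $10$ nodes of the $(10_{6},15_{4})$-configuration. First I would observe that, since $10$ is even, we cannot immediately get a fixed node from parity alone, so instead we use the hypothesis on $\KK'$. Over $\KK'$ the cubic $X_{\KK'}$ has a $\KK'$-point, and by Lemma~\ref{le6} (applied over $\KK'$) this is automatic from $X_{\KK'}$ being a cubic hypersurface; more usefully, the subgroup $\Gamma' = \Gal(\KK^{\operatorname{sep}}/\KK') \subset \Gamma$ of index $2$ acts on $\Sigma$ through an element-wise subgroup whose image in $S_{6}$ we call $H'$, while the full image $H \subset S_{6}$ of $\Gamma$ satisfies $[H:H'] \in \{1,2\}$. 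I want to find a $\KK$-point on $X$; a node defined over $\KK$ or over $\KK'$ would suffice, since in the latter case a $\Gamma$-orbit of size $2$ on $\Sigma$ consists of a conjugate pair $\{p, \tau p\}$, and — here is the crucial point — for the Segre cubic, the line through two distinct nodes either lies on the cubic or meets it in a third node; tracing through the $(10_{6},15_{4})$-configuration one checks that any two of the $10$ nodes lie on a common line contained in $\mathcal{S}$ (since two nodes always lie on a common plane of the configuration, as each node lies on $6$ of the $15$ planes and two planes meeting on the cubic meet in a line through a node). That line is then $\Gamma$-invariant, hence defined over $\KK$, and being a line it is $\cong \PP^{1}_{\KK}$, giving a $\KK$-point. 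So it remains to produce a node of $X_{\KK^{\operatorname{sep}}}$ defined over $\KK$ or over $\KK'$.

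To do that I would examine the action of $H'$ on the $10$ nodes using Lemma~\ref{le1}. If $H'$ is trivial or generated by an element of order $2$, then by Lemma~\ref{le1} the element fixes either $2$ or $4$ of the nodes (the cases $(1\,2)$, $(1\,2)(3\,4)$, $(1\,2)(3\,4)(5\,6)$ giving $4,2,4$ fixed nodes respectively), so in particular $\Gamma'$ has a fixed node $p$; then $\{p,\tau p\}$ is $\Gamma$-stable, and by the previous paragraph we win. If $H'$ is larger, I would instead analyse $H \subset S_{6}$ directly: since $\Gamma'$ fixes an $\LL=\KK'$-point of $X_{\KK'}$ and $X_{\KK'}$ is a cubic, we may blow up that point or use the structure of the $15$ planes, but the cleaner route is to note that the statement we are imitating is exactly \cite[Lemma 2.3]{Cor2}, whose proof shows that a cubic surface (resp.\ threefold here) with the stated property has a $\KK$-point by a Brauer-group / norm argument: the obstruction to descending the $\KK'$-point lives in a group killed by $2$, and killed again by passing through the quadratic tower, hence vanishes. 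I expect the main obstacle to be the book-keeping when $H'$ acts with no fixed node (i.e.\ $H'$ contains a fixed-point-free element like a $5$-cycle or the product of a $2$-cycle and a disjoint structure), where one must genuinely use that $X_{\KK'}$ — not merely $X_{\KK^{\operatorname{sep}}}$ — carries a rational point, and combine this with the fact from Lemma~\ref{le1} that a non-standard $S_{5}$ breaks the $15$ planes into orbits of size $5$ and $10$, so that the size-$5$ orbit of planes yields over $\KK'$ a rational point that descends. Collecting the cases gives a $\KK$-point on $X$ in all of them.
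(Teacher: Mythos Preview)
Your reduction to a single quadratic step is correct, but after that the argument goes astray in two ways.

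First, the lemma as stated makes no assumption that $X$ is a form of the Segre cubic: it concerns an arbitrary variety whose base change to $\LL$ is a cubic hypersurface in $\PP^{4}_{\LL}$. Your proof imports the $10$ nodes and the $(10_{6},15_{4})$-configuration, which are simply not available in general. Even if one is only interested in the application to the Segre cubic, the paper's proof does not use any of that structure.

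Second, and more seriously, even granting the Segre assumption your argument is incomplete. You try to produce a node defined over $\KK'$ by analysing the image $H'$ of $\Gal(\KK^{\mathrm{sep}}/\KK')$ in $S_{6}$, but you only handle the case where $H'$ has order at most $2$. For larger $H'$ (for instance if $H'$ contains a $5$-cycle acting with orbits $5+5$ on the nodes) there is no $\KK'$-rational node at all, and you fall back on a vague appeal to a ``Brauer-group / norm argument'' from the cited reference. At that point you have not used the one datum you are actually given, namely the $\LL$-point on $X_{\LL}$, which need not be a node.

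The paper's proof is much simpler and uses that point directly. With $\LL/\KK$ quadratic and Galois (since $\mathrm{char}\,\KK\neq 2$), let $P$ be the given $\LL$-point and $P^{\sigma}$ its conjugate. If $P=P^{\sigma}$ it is already a $\KK$-point. Otherwise the line $\ell$ through $P$ and $P^{\sigma}$ is $\Gal(\LL/\KK)$-invariant, hence defined over $\KK$. Either $\ell\subset X_{\LL}$, in which case $\ell\cong\PP^{1}_{\KK}$ gives $\KK$-points, or $\ell$ meets the cubic $X_{\LL}$ in three points counted with multiplicity, two of which are $P$ and $P^{\sigma}$; the residual third point is then fixed by $\Gal(\LL/\KK)$ and so is a $\KK$-point of $X$. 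This is the whole argument, and it needs nothing about singularities.
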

\begin{proof} There is a sequence of quadratic field extensions $$\KK=\LL_{0}\subset\LL_{1}\subset\LL_{2}\subset ... \subset \LL_{n}=\LL.$$ By the induction it is enough to consider the case when $\LL$ is a quadratic extension of $\KK$. Since the characteristic of $\KK$ differs from $2$, the extension $\KK\subset \LL$ is a Galois extension. We know that the cubic $X_{\LL}$ contains an $\LL$-point. This point is either defined over $\KK$ or its $\Gal(\LL/\KK)$-orbit consists of two points. In the second case the line $l$ passing through these points is defined over $\KK$. If this line lies on $X_{\LL}$ then $X$ contains every $\KK$-point of the line $l$. If the line $l$ doesn't lie on $X_{\LL}$ then the third intersection point of the line $l$ and the cubic $X_{\LL}$ is defined over $\KK$.
\end{proof}
\begin{lemma}\label{le14} Let $\KK$ be a field of characteristic zero or $p\geqslant 5$. Then every form of the Segre cubic over the field $\KK$ has a $\KK$-point and isomorphic to a cubic hypersurface.
\end{lemma}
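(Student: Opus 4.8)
The plan is to reduce, via Lemma~\ref{le6}, to showing that $X$ carries a $\KK$-point, and to split this into two parts: first that every form $X$ is already isomorphic to a cubic hypersurface in $\PP^4_{\KK}$, and then that such a form has a $\KK$-point. For the first part the key observation is that the $S_6$-action on the Segre cubic $\mathcal S\subset\PP^5$ is induced by the permutation action on the six-dimensional space of coordinates, hence by genuine matrices in $\mathrm{GL}_6$; this linearizes $\OOO_{\mathcal S}(1)=-\tfrac12 K_{\mathcal S}$, so for any form $X$ the class $-\tfrac12 K_X$ is represented by a line bundle already defined over $\KK$ (there is no Brauer obstruction), and its linear system embeds $X$ into $\PP^4_{\KK}$ as a cubic hypersurface. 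Combined with \cite[Proposition~3.2]{Cor2} --- the converse implication quoted in Lemma~\ref{le6} --- this already yields a $\KK$-point, but I would also like a more hands-on argument for the second part, built from Lemma~\ref{le11}.

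For that, fix $\KK^{\operatorname{sep}}$ and let $\Gamma=\Gal(\KK^{\operatorname{sep}}/\KK)$ act on the $(10_6,15_4)$-configuration of nodes and planes of $X_{\KK^{\operatorname{sep}}}$; since all automorphisms of that configuration come from $\Aut(\mathcal S)\cong S_6$, this gives a homomorphism $\rho\colon\Gamma\to S_6$ with finite image $G$. If $G$ fixes one of the ten nodes, that node is a $\KK$-point; by Lemma~\ref{le1} every element of $S_6$ of order at most $2$ fixes at least two nodes, which in particular re-proves Corollary~\ref{cor1}. I would then induct on $|G|$: if $G$ has a subgroup $H$ of index $2$, pass to the corresponding quadratic extension $\LL$; the form $X_{\LL}$ has Galois image $H$, hence by induction has an $\LL$-point and, by Lemma~\ref{le6}, is a cubic in $\PP^4_{\LL}$, so Lemma~\ref{le11} descends a $\KK$-point to $X$. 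This reduces the problem to subgroups $G\le S_6$ with no subgroup of index $2$, i.e. $G=G^2[G,G]$: up to conjugacy these are $\{1\}$, $C_3$ (two classes), $C_5$, $C_3\times C_3$, $A_4$ (two classes), $A_5$ (standard and non-standard), and $A_6$. For $\{1\}$, $C_3$ and $C_3\times C_3$ one checks directly, identifying nodes with partitions of $\{1,\dots,6\}$ into two triples, that $G$ fixes a node. For $C_5$, which preserves a coordinate hyperplane $\{x_i=0\}$, the section $X\cap\{x_i=0\}$ is a $\KK$-form of the Clebsch cubic surface, a $5$-cycle fixes two of its twenty-seven lines, and a $\KK$-rational line furnishes $\KK$-points; one can similarly use that if the perfect matchings attached to a Galois orbit of planes have a common pair $\{a,b\}$ then the three planes through $\{a,b\}$ meet in a $\KK$-rational smooth point of $X$.

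The remaining --- and genuinely hard --- cases are the ``transitive'' ones, chiefly $G=A_6$ (also non-standard $A_5$ and $A_4$), where $G$ fixes no node, no plane and no coordinate hyperplane, so none of the elementary devices applies. Here one is forced back on \cite[Proposition~3.2]{Cor2} applied to the cubic hypersurface $X\subset\PP^4_\KK$ produced in the first paragraph; alternatively, staying with Lemma~\ref{le11}, one passes to the fixed field $\LL$ of the preimage of a Sylow $2$-subgroup $P\le G$, so that $[\LL:\KK]$ is odd and, since $15$ is odd, $P$ fixes one of the fifteen planes, whence $X_{\LL}$ has an $\LL$-point and is a cubic in $\PP^4_{\LL}$ --- and the remaining task is to descend the existence of a point along the odd-degree extension over which that plane is already defined. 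This odd-degree descent for these special nodal cubic threefolds is what I expect to be the main obstacle, and the reason the statement is not entirely formal.
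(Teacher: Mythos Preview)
Your first paragraph already contains a complete and correct proof, and it is cleaner than the paper's. The observation that $\Aut(\mathcal S)\cong S_6$ acts through honest permutation matrices in $\mathrm{GL}_6$ means the cocycle $\Gamma\to S_6$ defining any form lifts to $\mathrm{GL}_6$; equivalently, that cocycle encodes a degree-$6$ \'etale $\KK$-algebra $A$, and the form is cut out in $\PP(A)\cong\PP^5_\KK$ by the $\KK$-rational equations $\operatorname{tr}(a)=\operatorname{tr}(a^3)=0$. So every form is visibly a cubic in $\PP^4_\KK$, and Lemma~\ref{le6} (via \cite[Proposition~3.2]{Cor2}) supplies the $\KK$-point.

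The paper takes a more circuitous route: it does not show the Brauer obstruction to $-\tfrac12K_X\in\Pic(X)$ vanishes, but only that it is $2$-torsion, then invokes the Merkurjev--Suslin theorem to split it over a composite $\LL$ of quadratic extensions, embeds $X_\LL$ as a cubic, gets an $\LL$-point from Lemma~\ref{le6}, and finally descends to a $\KK$-point using Lemma~\ref{le11}. Both arguments ultimately rest on Coray's result, but yours dispenses with Merkurjev--Suslin and with Lemma~\ref{le11} by showing directly that the Brauer class is trivial. That is a genuine simplification.

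Your second and third paragraphs, the ``hands-on'' induction on the Galois image $G$, do not add to the proof. The base cases you cannot settle --- non-standard $A_4$, non-standard $A_5$, $A_6$ --- are precisely where you fall back either on \cite[Proposition~3.2]{Cor2} or on an odd-degree descent that you rightly identify as the crux, and which is essentially Coray's theorem again. Since your first paragraph already invokes Coray once and finishes, the detour buys nothing; I would cut it, or at most retain the remark that whenever the Galois image fixes a node one gets an explicit singular $\KK$-point.
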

\begin{proof} Let $X$ be a form of the Segre cubic. There is the exact sequence $$0\to \Pic(X)\to \Pic(X_{\KK^{\operatorname{sep}}})^{\Gal(\KK^{\operatorname{sep}}/\KK)}\to \operatorname{Br}(\KK),$$
see, for example,~\cite[Problem 3.3.5(iii)]{GSh}. If the divisor class $-\frac{1}{2}K_{X}$ is not defined over the field $\KK$, then the group $\Pic(X)$ is embedded into $\Pic(X_{\KK^{\operatorname{sep}}})^{\Gal(\KK^{\operatorname{sep}}/\KK)}$ as a subgroup of index 2, so we define canonically an element of order 2 in the Brauer group $\operatorname{Br}(\KK)$. Since this element has a representation as a tensor product of quaternion algebras over the field $\KK$ (see~\cite[Theorem 11.5]{MeS}), there is a composite $\LL$ of quadratic extensions of the field $\KK$ such that our algebra splits over this extension. Thus the corresponding embedding $$\Pic(X_{\LL})\to \Pic(X_{\KK^{\operatorname{sep}}})^{\Gal(\KK^{\operatorname{sep}}/\LL)}$$ is an isomorphism. The linear system $|-\frac{1}{2}K_{X_{\LL}}|$ induces an embedding of $X_{\LL}$ into~$\PP^{4}_{\LL}$ as a cubic hypersurface. By Lemma~\ref{le6} there is an $\LL$-point on $X_{\LL}$. By Lemma~\ref{le11} there is a $\KK$-point on $X$. In particular, by Lemma~\ref{le6}, the variety $X$ is isomorphic to a cubic hypersurface in $\PP^{4}_{\KK}$.
\end{proof}
Now we return to the field of real numbers.
\begin{proposition}\label{pr3} There are exactly $4$ real forms of the Segre cubic $\mathcal{S}$ up to isomorphism. All of them are three-dimensional cubic hypersurfaces and rational over the field $\RR$.
\end{proposition}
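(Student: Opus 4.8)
The plan is to count the forms via Galois cohomology and then extract the two geometric properties from the lemmas proved above. First I would invoke Galois descent for the projective variety $\mathcal{S}$: the set of isomorphism classes of real forms of $\mathcal{S}$ is in bijection with the pointed set $H^{1}\!\left(\Gal(\CC/\RR),\Aut(\mathcal{S}_{\CC})\right)$. Here $\Gal(\CC/\RR)\simeq C_{2}$ is generated by complex conjugation $\sigma$, and $\Aut(\mathcal{S}_{\CC})\simeq S_{6}$ acts by permuting the standard coordinates $x_{1},\dots,x_{6}$; since these automorphisms are given by permutation matrices with rational entries, $\sigma$ acts trivially on $S_{6}$. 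For a trivial action a $1$-cocycle amounts to the choice of an element $g=c_{\sigma}\in S_{6}$ with $g^{2}=1$, and two cocycles are cohomologous exactly when the corresponding elements are conjugate; so the real forms are in bijection with the conjugacy classes of elements of order dividing $2$ in $S_{6}$.

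Next I would enumerate those classes: an element of order dividing $2$ in $S_{6}$ has cycle type $1^{6}$, $2\cdot 1^{4}$, $2^{2}\cdot 1^{2}$ or $2^{3}$, and these four cycle types form four distinct conjugacy classes, giving exactly $4$ real forms up to isomorphism. For the remaining assertions let $X$ be any real form. By Corollary~\ref{cor1} the variety $X$ has a singular point $p$ defined over $\RR$, so by Lemma~\ref{le6} (equivalently Lemma~\ref{le14}) $X$ is isomorphic to a cubic hypersurface in $\PP^{4}_{\RR}$, and I fix such an embedding. All ten singular points of $\mathcal{S}_{\CC}$ are ordinary double points, so $p$ is an ordinary double point of $X$; then I would project $X$ from $p$. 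A general line through $p$ meets the cubic $X$ with multiplicity $2$ at $p$ and in exactly one further point, so linear projection from $p$ is a birational map $X\dashrightarrow\PP^{3}_{\RR}$ defined over $\RR$, with inverse sending a general $q\in\PP^{3}_{\RR}$ to the residual intersection point of the line $\overline{pq}$ with $X$. Hence every real form is rational over $\RR$.

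The cycle-type count and the projection are routine; the point requiring care is the interface between the abstract cohomology set and the geometry — namely that descent is effective, so that each of the four classes really is realized by an $\RR$-variety and distinct classes give non-isomorphic ones — together with the elementary but essential fact that complex conjugation acts trivially on $\Aut(\mathcal{S}_{\CC})$, which holds because those automorphisms merely permute the standard coordinates. It is also worth checking explicitly that the singular $\RR$-point supplied by Corollary~\ref{cor1} is genuinely a node of $X$, since that is exactly what makes the projection from it birational.
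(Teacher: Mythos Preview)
Your proof is correct and follows essentially the same route as the paper: both count real forms via $H^{1}(\Gal(\CC/\RR),S_{6})$ identified with conjugacy classes of involutions (including the identity) in $S_{6}$, then use Corollary~\ref{cor1} and Lemma~\ref{le6} to get a cubic embedding with a real node and project from it. Your added remarks---that the Galois action on $\Aut(\mathcal{S}_{\CC})$ is trivial because the automorphisms are rational permutation matrices, and that the projection is birational precisely because the point is an ordinary double point---are exactly the small justifications the paper leaves implicit.
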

\begin{proof} It is well-known (see, for example,~\cite[Chapter III, \S 1]{Ser}) that there is a one-to-one correspondence between the forms of a real variety~$X$ and the elements of $\operatorname{H}^{1}(\operatorname{Gal}(\CC/\RR), \Aut(X_{\CC}))$. The latter set, in turn, is in one-to-one correspondence with the set of all homomorphisms $$\operatorname{Gal(\CC/\RR)}\simeq C_{2}\to S_{6}\simeq\Aut(X_{\CC})$$ up to conjugation. Such homomorphisms are defined by conjugacy classes of elements of order 2 or 1 in the group $S_{6}$. There are exactly four such classes: the trivial permutation and conjugacy classes of the transposition $(1\ 2)$, the product of two transpositions $(1\ 2)(3\ 4)$ and the product of three transpositions $(1\ 2)(3\ 4)(5\ 6)$.

 Let $X$ be a real form of the Segre cubic $\mathcal{S}$. By Corollary~\ref{cor1} the variety $X$ contains an $\RR$-point. By Lemma~\ref{le6} the variety $X$ is isomorphic to a cubic in $\PP^{4}_{\RR}$. This cubic contains a singular point defined over the field $\RR$ and the projection from such a point gives us a birational map from the variety $X$ to $\PP^{3}_{\RR}$.
\end{proof}
We say that a real form $X$ of the Segre cubic is of type I (resp., II, III or IV) if the image of the complex conjugation in the group $\Aut(\mathcal{S})\simeq S_{6}$ is the trivial permutation (resp., is conjugate to the transposition $(1\ 2)$, is conjugate to the permutation $(1\ 2)(3\ 4)$ or is conjugate to the permutation $(1\ 2)(3\ 4)(5\ 6)$).
\begin{remark}\label{rem} The form of the Segre cubic of type I can be obtained in the following way: blow up five $\RR$-points of $\PP^{3}_{\RR}$ in general position and contract 10 proper transforms of lines passing through pairs of points. The obtained variety is the required form of the Segre cubic. It also can be defined explicitely by the equations~\eqref{eq}. Note that in this case there is an action of the group $S_{5}$ on $\PP^{3}$ with five marked points and the construction is $S_{5}$-equivariant. The forms of type IV and III can be constructed in a similar way, but we need to blow up $\PP^{3}_{\RR}$ in 3 real points and one pair of conjugated points or one real point and two pairs of conjugated points in general position respectively. One can easily see that we obtain exactly forms of type IV and III by calculation of numbers of singular $\RR$-points and planes defined over $\RR$ in both cases. The form of type II has no such a transformation into $\PP^{3}_{\RR}$, but by Proposition~\ref{pr3} a transformation of other type exists.
\end{remark}
In the following proposition we describe automorphism groups of all forms of the Segre cubic.
\begin{proposition}\label{pr4} The automorphism group of a form of the Segre cubic over an arbitrary field $\KK$ of characteristic zero or $p\geqslant 5$ coincides with the centralizer of the image of the Galois group $\Gal(\KK^{\operatorname{sep}}/\KK)$ in the automorphism group of the $(10_{6}, 15_{4})$-configuration of singular points and planes on $\mathcal{S}$. The automorphism group of the form of the Segre cubic of type $\mathrm{I}$ (resp., $\mathrm{II}$, $\mathrm{III}$ or $\mathrm{IV}$) is isomorphic to $S_{6}$ (resp., $C_{2}\times S_{4}$, $C_{2}\times D_{8}$ or $C_{2}^{3}\rtimes S_{3}).$
\end{proposition}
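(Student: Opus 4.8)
The plan is to reduce the computation of $\Aut(X)$ to group theory on the $(10_6,15_4)$-configuration, and then read off the four centralizers from Lemma~\ref{le1}. First I would establish the general principle: for a form $X$ of the Segre cubic over $\KK$, with $\bar X = X_{\KK^{\operatorname{sep}}}\cong\mathcal S$, the form is classified by a cocycle, equivalently (since $\Aut(\mathcal S)=S_6$ is finite and acts on $\mathcal S$) by the Galois action $\rho\colon\Gal(\KK^{\operatorname{sep}}/\KK)\to S_6$ twisting the trivial action. A $\KK$-automorphism of $X$ is precisely a $\KK^{\operatorname{sep}}$-automorphism of $\bar X$ that commutes with the twisted Galois action, i.e.\ an element of $S_6$ commuting with the image $\rho(\Gal(\KK^{\operatorname{sep}}/\KK))$. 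Since $\Aut(\mathcal S)=S_6$ acts faithfully on the $(10_6,15_4)$-configuration and induces its full automorphism group (quoted in the preliminaries), this identifies $\Aut(X)$ with the centralizer in $S_6=\Aut(\text{config})$ of the image of the Galois group. This is the first sentence of the Proposition.

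Next I would handle the four real forms. By the classification in Proposition~\ref{pr3}, the image of complex conjugation in $S_6$ is, up to conjugacy, one of: the identity (type I), $(1\ 2)$ (type II), $(1\ 2)(3\ 4)$ (type III), or $(1\ 2)(3\ 4)(5\ 6)$ (type IV). By the principle just established, the automorphism group of the corresponding form is the centralizer in $S_6$ of that element (the centralizer of a cyclic group is the centralizer of its generator). For type I the centralizer of the identity is all of $S_6$. For types II, III, IV the centralizers are computed in Lemma~\ref{le1}: $C_2\times S_4$, $C_2\times D_8$, and $C_2^3\rtimes S_3$ respectively. Substituting gives exactly the claimed isomorphism types, so the second sentence follows immediately.

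The only genuine content to spell out carefully is the identification $\Aut(X)\cong Z_{S_6}(\operatorname{image}\rho)$; the rest is quotation. I would be slightly careful about two points here. First, that a $\KK^{\operatorname{sep}}$-automorphism of $\bar X$ commuting with the twisted Galois action descends to a $\KK$-automorphism of $X$ — this is faithfully flat (Galois) descent for morphisms, standard and citable. Second, that every $\KK$-automorphism of $X$ arises this way — clear, since it base-changes to a Galois-equivariant automorphism of $\bar X$. The passage through the configuration is needed only to have a concrete combinatorial model in which Lemma~\ref{le1}'s centralizer computations live; since $\Aut(\mathcal S)\to\Aut(\text{config})$ is an isomorphism, it is harmless.

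The main obstacle, such as it is, is purely bookkeeping: making sure the centralizer is taken of the \emph{image} of the Galois group (a possibly large group) rather than of a single element, and noting that in the real case the image is cyclic of order $\le 2$ so the two notions agree and Lemma~\ref{le1} applies verbatim. No new geometry is required beyond what the earlier lemmas supply.
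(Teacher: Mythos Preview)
Your proof is correct and follows the same overall plan as the paper: identify $\Aut(X)$ with the centralizer in $S_6$ of the Galois image, then read off the four real cases from Lemma~\ref{le1}. The one step handled differently is the reverse inclusion, i.e.\ why an element of $S_6$ centralizing the Galois image actually descends to a $\KK$-automorphism. You appeal to Galois descent for morphisms, which is fine but tacitly uses that every element of $\Aut(\mathcal S)=S_6$ is already defined over the prime field (they are permutation matrices), so that ``commutes with the twisted Galois action'' collapses to ``commutes in $S_6$''; you should make that remark explicit, since it is exactly what turns your ``i.e.'' into an equality. The paper instead argues geometrically: for $g$ centralizing $\sigma$ in $S_6$, the commutator $\widetilde g^{\,-1}\sigma^{-1}\widetilde g\,\sigma$ is a linear automorphism of $\PP^4_{\KK^{\operatorname{sep}}}$ that fixes all ten nodes, and since the nodes are in sufficiently general position this forces it to be the identity. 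Your route is slicker once the descent formalism is granted; the paper's is more self-contained and makes transparent why the $(10_6,15_4)$-configuration already pins down the automorphism.
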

\begin{proof} Let $X$ be a form of the Segre cubic over $\KK$. There is a canonical embedding of the automorphism group of $X$ into the automorphism group of the $(10_{6}, 15_{4})$-configuration of singular points and planes on $X_{\KK^{\operatorname{sep}}}\simeq\mathcal{S}$ and the canonical map from the Galois group $\Gal(\KK^{\operatorname{sep}}/\KK)$ into the same group $S_{6}$. Since all automorphisms defined over the base field commute with the action of the Galois group, the image of such embedding is contained in the centralizer of the image of the Galois group $\Gal(\KK^{\operatorname{sep}}/\KK)$.

  Let $g\in S_{6}$ be an element of the automorphism group of the $(10_{6}, 15_{4})$-configura\-tion which commutes with the image of some element $\sigma\in\Gal(\KK^{\operatorname{sep}}/\KK)$. Let $\widetilde{g}$ be the corresponding automorphism of the variety $X_{\KK^{\operatorname{sep}}}$. Then $\widetilde{g}^{-1}\circ\sigma^{-1}\circ\widetilde{g}\circ\sigma$ is a linear transformation of $\PP_{\KK^{\operatorname{sep}}}^{4}$ which fixes all singular points of the variety $X_{\KK^{\operatorname{sep}}}$. Since there are 10 singular points and they are in general enough position this linear map is trivial. Thus $\sigma^{-1}\circ\widetilde{g}\circ\sigma=\widetilde{g}$. If $g$ commutes with all elements of the image of $\Gal(\KK^{\operatorname{sep}}/\KK)$ then $\widetilde{g}$ is defined over the base field $\KK$. As a consequence, the embedding of the group~$\Aut(X)$ into the centralizer of the image of the Galois group is an isomorphism.

The second assertion is a consequense of Lemma~\ref{le1}.
\end{proof}
\section{Birational geometry of the forms of the Segre cubic}

For the classification of finite subgroups in Cremona groups it is important to study $G$-birational rigidity of Fano varieties.
\begin{definition} Let $X$ and $Y$ be a varieties with an action of a finite group $G$. We call a rational map $f:X\dasharrow Y$ a \emph{$G$-equivariant map} if there exist an automorphism $\tau$ of the group $G$ such that the following diagram commutes for every $g\in G$:
$$\xymatrix{
X \ar@{-->}[r]^{f}\ar[d]^{g}& Y \ar[d]^{\tau(g)}
\\
X \ar@{-->}[r]^{f}& Y
}$$
We denote the group of $G$-equivariant automorphisms of a $G$-variety $X$ by $\Aut^{G}(X)$ and the group of $G$-equivariant birational selfmaps of a $G$-variety $X$ by $\Bir^{G}(X)$.
\end{definition}
\begin{definition}\label{def3} Let $G$ be a finite subgroup of the automorphism group of a Fano variety $X$ with terminal singularities, and suppose that $X$ is a $G\QQ$-factorial variety and $\rk\Pic(X)^{G}=1$. The variety $X$ is called \emph{$G$-birationally rigid} if there is no birational map from $X$ to another $G$-Mori fibration. If one also has $\Bir^{G}(X)=\Aut^{G}(X)$ then $X$ is called \emph{$G$-birationally superrigid}.
\end{definition}

There is the following theorem:
\begin{theorem}\label{th57}\textnormal{(see~\cite[Proposition 4.1, Theorem 4.8]{Avi})} Let $G\subset\Aut(\mathcal{S})$ be a subgroup of the automorphism group of the Segre cubic over an algebraically closed field of characteristic zero. Then the variety $\mathcal{S}$ is  $G$-birationally rigid if and only if $G$ containts a standard subgroup $A_{5}\subset \Aut(\mathcal{S})$. Moreover, in this case $\mathcal{S}$ is $G$-birationally superrigid.
\end{theorem}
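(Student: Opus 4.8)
The plan is to run the $G$-equivariant Noether--Fano--Iskovskikh method inside the Sarkisov program for Fano threefolds with terminal Gorenstein singularities. First I would record the hypotheses of Definition~\ref{def3}: since every automorphism of $\mathcal{S}$ preserves $-K_{\mathcal{S}}$ and $\Pic(\mathcal{S})=\ZZ$, one has $\rk\Pic(\mathcal{S})^{G}=1$ for every $G$, and $\mathcal{S}$ is $G\QQ$-factorial if and only if $\bigl(\Cl(\mathcal{S})\otimes\QQ\,/\,\Pic(\mathcal{S})\otimes\QQ\bigr)^{G}=0$. The $5$-dimensional $S_{6}$-module $\Cl(\mathcal{S})\otimes\QQ/\Pic(\mathcal{S})\otimes\QQ$ --- spanned by the classes of the $15$ planes, with the $10$ nodes producing the relations --- restricts to an \emph{irreducible} module over a standard $A_{5}$, so $\mathcal{S}$ is $G\QQ$-factorial whenever $G$ contains a standard $A_{5}$.

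For the ``only if'' direction, suppose $G$ does not contain a standard $A_{5}$. By Lemma~\ref{le1} there are essentially three possibilities. If $G$ fixes one of the ten nodes, projection from that node is a $G$-equivariant birational map to $\PP^{3}_{\KK}$ (the projection of a nodal cubic threefold from the node is birational), so $\mathcal{S}$ is $G$-birational to a different $G$-Mori fibration. If $G$ fixes one of the fifteen planes $\Pi$, projection from $\Pi$ is a $G$-equivariant conic bundle structure $\mathcal{S}\dashrightarrow\PP^{1}_{\KK}$, again a different $G$-Mori fibration. In the remaining cases --- $G$ fixing no node and no plane, for instance $G$ inside a non-standard $S_{5}$ or a cyclic or dihedral group --- either the module above acquires a nonzero invariant, so that $\mathcal{S}$ is not $G\QQ$-factorial and the notion of $G$-birational rigidity does not apply, or one exhibits such a link explicitly. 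Either way $\mathcal{S}$ is not $G$-birationally rigid.

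For the ``if'' direction I would prove superrigidity directly. Let $G$ contain a standard $A_{5}$ and let $\chi\colon\mathcal{S}\dashrightarrow Y$ be a $G$-birational map to a $G$-Mori fibration $Y/T$, or $\chi\in\Bir^{G}(\mathcal{S})$. Pulling back a $G$-invariant very ample linear system from $Y$ (resp.\ taking $\chi_{*}^{-1}|-2K_{\mathcal{S}}|$) and using $\rk\Pic(\mathcal{S})^{G}=1$ produces a $G$-invariant mobile linear system $\mathcal{M}\subset|-nK_{\mathcal{S}}|=|\mathcal{O}_{\mathcal{S}}(2n)|$ for some $n\geqslant 1$. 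If $\chi$ is not an isomorphism of Mori fibrations (resp.\ not biregular), the Noether--Fano--Iskovskikh inequality forces the pair $(\mathcal{S},\tfrac1n\mathcal{M})$ to be non-canonical along a $G$-invariant centre $Z$, which is a $G$-orbit of smooth points, a $G$-invariant curve, or contained in the ten-element orbit $\Sing(\mathcal{S})$; the task is to exclude all three. The available numerics are $(-K_{\mathcal{S}})^{3}=24$ and $(-K_{\mathcal{S}})\cdot\mathcal{M}^{2}=24n^{2}$, so a general $\mathcal{M}^{2}$ is an effective $1$-cycle of degree $12n^{2}$ with respect to $\mathcal{O}_{\mathcal{S}}(1)$. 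If $Z$ is a $G$-orbit of smooth points, Corti's $4n^{2}$-inequality at each point, after grouping the components of $\mathcal{M}^{2}$ into $G$-orbits, bounds the length of $Z$; one then checks, against the short $G$-orbits of $\mathcal{S}$ --- those coming from the $(10_{6},15_{4})$-configuration and from the $G$-invariant Clebsch cubic surface $\mathcal{S}\cap\{x_{i}=0\}$ --- that no such orbit can be a maximal centre, the borderline cases requiring a refined local analysis. If $Z$ is a $G$-invariant curve, the inequality $\mult_{Z}\mathcal{M}>n$ bounds its degree and one excludes the $G$-invariant curves of small degree (lines and conics on $\mathcal{S}$). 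If $Z\subset\Sing(\mathcal{S})$, the cDV refinement of the Noether--Fano inequality gives a definite positive contribution to $\mathcal{M}^{2}$ at each node, and since $G$ acts transitively on the ten nodes their total contribution exceeds $\deg\mathcal{M}^{2}$, a contradiction. Hence $(\mathcal{S},\tfrac1n\mathcal{M})$ is canonical, which forces $\mathcal{M}=|-2K_{\mathcal{S}}|$; thus $\mathcal{S}$ admits no $G$-link, and when $\chi\in\Bir^{G}(\mathcal{S})$ the map $\chi$ preserves $|-2K_{\mathcal{S}}|$ and is induced by a linear automorphism, so $\Bir^{G}(\mathcal{S})=\Aut^{G}(\mathcal{S})$.

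The main obstacle is not the bookkeeping of subgroups but the exclusion of the ``small'' centres surviving the crude numerical estimates: the $A_{5}$-orbits of length close to $5$ or $6$, the low-degree $G$-invariant curves, and the orbit of nodes. For these one must use the fine geometry of $\mathcal{S}$ --- the invariant Clebsch cubic surface, the $(10_{6},15_{4})$-configuration of planes and nodes, the action of $A_{5}$ on lines --- together with the cDV version of the Noether--Fano inequality, rather than global degree counts; that is where the real content lies.
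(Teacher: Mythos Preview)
The paper does not prove Theorem~\ref{th57}; it is quoted from \cite[Proposition~4.1, Theorem~4.8]{Avi}. What the paper \emph{does} prove is the generalisation to arbitrary fields of characteristic zero (the theorem immediately following), and the argument there --- case analysis of maximal subgroups not containing a standard $A_{5}$, followed by an appeal to \cite[Lemmas~4.6--4.7]{Avi} for canonicity of $(\mathcal{S},\tfrac1n\mathcal{M})$ and then Noether--Fano --- is exactly the scheme you outline. So your plan is correct and is the same route as in \cite{Avi} and in the paper's own extension.

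Two inaccuracies are worth flagging. First, when $G$ fixes a plane $\Pi$, the projection from $\Pi$ gives a \emph{quadric-surface} (degree~$8$ del~Pezzo) fibration over $\PP^{1}$, not a conic bundle; but the cleaner observation, and the one the paper uses, is simply that $\Pi$ is a $G$-invariant Weil divisor which is not $\QQ$-Cartier, so $\mathcal{S}$ is not $G\QQ$-factorial and the question of rigidity is moot. The same remark applies to the non-standard $S_{5}$: the $5$-plane orbit already breaks $G\QQ$-factoriality. Second, your endgame is slightly garbled: canonicity of $(\mathcal{S},\tfrac1n\mathcal{M})$ does not ``force $\mathcal{M}=|-2K_{\mathcal{S}}|$''; rather, via Noether--Fano it forces $\chi$ to be an isomorphism of $G$-Mori fibre spaces (hence biregular when the target is $\mathcal{S}$ itself), which is the desired conclusion. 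With these corrections your sketch matches the intended proof.
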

The next theorem is an analog of the previous statement over an arbitrary field of characteristic zero.
\begin{theorem} Let $X$ be a form of the Segre cubic over some field $\KK$ of characteristic zero, and let $G\subset\Aut(X)$ be a subgroup. Assume that \mbox{$\rk\Pic(X)^{G}=1$} and that the variety $X$ is $G\QQ$-factorial and $G$-birationally rigid. Then the variety $X$ can be explicitely given by equations~\eqref{eq} and $G$ contains a standard subgroup~$A_{5}\subset S_{6}$. Conversely, the variety given by the equations~\eqref{eq} is $A_{5}$-birationally superrigid with respect to a standard subgroup $A_{5}\subset\Aut(X)$.
\end{theorem}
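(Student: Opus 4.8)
The plan is to deduce both implications from Theorem~\ref{th57} by Galois descent. Write $\ov\KK=\KK^{\operatorname{sep}}$, $\Gamma=\Gal(\ov\KK/\KK)$, identify $\Aut(X_{\ov\KK})\cong\Aut(\mathcal S)$ with $S_6$ acting on the $(10_6,15_4)$-configuration, and let $H\subseteq S_6$ be the image of $\Gamma$; when $G$ is given, let $\ov G\subseteq S_6$ be the image of $G$ (the map $G\hookrightarrow S_6$ being injective). By Proposition~\ref{pr4} the subgroup $\ov G$ centralizes $H$, so $\widehat G:=\ov G\cdot H$ is a subgroup of $S_6$, and $X$ is the form of $\mathcal S$ that becomes split once the acting group is enlarged to $\widehat G$. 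Two facts will be used throughout: (i) $G$-$\QQ$-factoriality of $X$ over $\KK$, and the numbers $\rk\Pic(X)^G$, $\rk\Cl(X)^G$, are equivalent to the corresponding $\widehat G$-statements for $\mathcal S$ over $\ov\KK$, because rationally $\Pic(X)^G_\QQ=\Pic(\mathcal S)^{\widehat G}_\QQ$ and $\Cl(X)^G_\QQ=\Cl(\mathcal S)^{\widehat G}_\QQ$ (the Brauer obstruction being torsion); and (ii) a birational map of normal projective $\KK$-varieties that becomes an equivariant isomorphism over $\ov\KK$ is already an equivariant isomorphism over $\KK$, while the equivariant links of the Sarkisov program used to detect non-rigidity are canonical, hence $\Gamma$-equivariant once the acting group contains $\operatorname{im}\Gamma$, hence descend to $\KK$.

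\emph{The converse.} Here $H=1$ and $X$ is the split cubic of~\eqref{eq}, with $G$ a standard $A_5$. One first records that $X$ satisfies the standing hypotheses of Definition~\ref{def3}: it is a terminal Fano cubic threefold by Lemma~\ref{le14}; a standard $A_5$ acts transitively on the fifteen planes (a direct check, in the spirit of Lemma~\ref{le1}), so the only $A_5$-invariant Weil divisor classes on $\mathcal S$, up to proportionality, are $-\tfrac12 K$ and the sum of the planes, the latter being itself proportional to $-\tfrac12 K$; hence $\rk\Cl(\mathcal S)^{A_5}_\QQ=\rk\Pic(\mathcal S)^{A_5}_\QQ=1$, so by (i) $X$ is $A_5$-$\QQ$-factorial with $\rk\Pic(X)^{A_5}=1$. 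Now suppose $f\colon X\dashrightarrow Y$ is an $A_5$-birational map to an $A_5$-Mori fibration $Y\to Z$ over $\KK$. Base-changing, $Y_{\ov\KK}\to Z_{\ov\KK}$ is an $(A_5\times Q)$-Mori fibration, where $Q$ is the finite image of $\Gamma$ in $\Aut(Y_{\ov\KK})$: the $(A_5\times Q)$-invariant Weil divisors on $Y_{\ov\KK}$ are exactly the $A_5$-invariant Weil divisors of $Y$, which are $\QQ$-Cartier, and relative Picard ranks are preserved. Since $Q$ acts trivially on $X_{\ov\KK}=\mathcal S$, Theorem~\ref{th57} for the standard $A_5\subset S_6$ (whose conclusion is insensitive to the trivially-acting factor $Q$) forces $f_{\ov\KK}$ to be an $A_5$-isomorphism $\mathcal S\to Y_{\ov\KK}$ onto a fibration over a point. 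Being base-changed from $\KK$, $f_{\ov\KK}$ is $\Gamma$-equivariant, so by (ii) it descends to an $A_5$-isomorphism $X\to Y$ over $\KK$ with $Z=\Spec\KK$; thus $X$ is $A_5$-birationally rigid. Applying the same argument with $Y=X$ and $f\in\Bir^{A_5}(X)$ gives $\Bir^{A_5}(X)=\Aut^{A_5}(X)$, i.e.\ superrigidity.

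\emph{The direct implication.} Assume $X$ is $G$-birationally rigid over $\KK$. If $\mathcal S$ were not $\widehat G$-birationally rigid over $\ov\KK$, a first $\widehat G$-equivariant Sarkisov link out of $\mathcal S$ would produce a $\widehat G$-Mori fibration not $\widehat G$-isomorphic to $\mathcal S$; by (ii) this link is $\Gamma$-equivariant and descends to a $G$-birational map from $X$ to a $G$-Mori fibration not $G$-isomorphic to $X$, a contradiction. Hence $\mathcal S$ is $\widehat G$-birationally rigid over $\ov\KK$, and Theorem~\ref{th57} gives a standard subgroup $A:=A_5\subseteq\widehat G=\ov G\cdot H$. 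Since $A$ is simple and $\ov G,H$ commute, $A\cap\ov G\in\{1,A\}$. If $A\subseteq\ov G$, then $H$ centralizes $A$, so $H\subseteq C_{S_6}(A)=1$ (the centralizer in $S_6$ of any subgroup isomorphic to $A_5$ being trivial); thus $X$ is split — i.e.\ is given by~\eqref{eq} — and $G\cong\ov G\supseteq A$, which is the assertion. If instead $A\cap\ov G=1$: in case $A\subseteq H$ we get $\ov G\subseteq C_{S_6}(A)=1$, so $G=1$; in the remaining case $A\cap H=1$, setting $N=\ov G\cap H$, the group $A$ embeds into both $\widehat G/\ov G\cong H/N$ and $\widehat G/H\cong\ov G/N$, so $|\widehat G|=|H/N|\cdot|\ov G/N|\cdot|N|\ge 60\cdot 60>|S_6|$, which is impossible. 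So either $X$ is split with $G\supseteq A$, or $G=1$. The last case has to be excluded: one must show that a genuinely twisted form of the Segre cubic cannot be $\{1\}$-birationally rigid, i.e.\ that such an $X$ — a $\QQ$-factorial terminal Fano cubic threefold of Picard number one with a $\KK$-point by Lemma~\ref{le14} — carries over $\KK$ a second Mori fibration in its birational class.

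The hard part is precisely this last exclusion, together with the descent bookkeeping behind (i) and (ii): one has to make rigorous that $G$-$\QQ$-factoriality and Mori-fibration structures transfer correctly between $\KK$ and $\ov\KK$ for the enlarged group $\widehat G$, that the equivariant birational surgeries involved are canonical enough to be $\Gamma$-equivariant, and — the genuinely geometric point — that no nontrivial form of the Segre cubic is birationally rigid as an abstract $\KK$-variety. The converse, by contrast, is essentially formal once Theorem~\ref{th57} is in hand.
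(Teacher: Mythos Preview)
Your strategy is to deduce both directions from Theorem~\ref{th57} by Galois descent, and the paper does something genuinely different: it argues directly over $\KK$ by an explicit case analysis on the subgroup $F=\langle G,H\rangle\subset S_6$, exhibiting in each case either a concrete $F$-invariant Weil non-$\QQ$-Cartier divisor (failure of $G\QQ$-factoriality) or an explicit $G$-equivariant projection from a $\KK$-point or $\KK$-plane (producing another $G$-Mori fibration). For the converse, the paper simply observes that canonicity of the pair $(X,\tfrac1\mu\mathcal M)$ is a geometric condition already established in~\cite{Avi}, and then applies the Noether--Fano inequalities over~$\KK$; no descent is needed.

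Your argument for the direct implication has a real gap, not a bookkeeping issue. The step ``if $\mathcal S$ were not $\widehat G$-rigid over~$\ov\KK$, a first $\widehat G$-Sarkisov link descends to~$\KK$'' is exactly the kind of statement the paper's closing remark flags as open: the passage between rigidity over a field and over an extension is not known in general, and Sarkisov links are not canonical, so $H$-equivariance of a link in the geometric sense does not automatically give $\Gamma$-equivariance in the semilinear sense needed for descent of the target variety. You acknowledge this is ``hard'' but then treat it as settled; it is not. The paper sidesteps the problem entirely by never invoking rigidity over~$\ov\KK$: its projections and bad divisors are all visibly defined over~$\KK$.

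You also leave the case $G=1$ genuinely open, calling it ``the genuinely geometric point''. The paper dispatches it in one line: project from any plane in $\PP^4_\KK$ to get a cubic-surface fibration. So after your group theory (which is correct), the two outstanding cases --- the descent of non-rigidity and the trivial-$G$ case --- are precisely the places where the paper supplies concrete geometry that your reduction-to-$\ov\KK$ scheme cannot replace.
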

\begin{proof} As were noticed earlier, there is a canonical embedding~$G\subset S_{6}$ where $S_{6}$ is the automorphism group of the $(10_{6}, 15_{4})$-configuration. Denote by $H\subset S_{6}$ the image of the Galois group $\Gal(\overline{\KK}/\KK)$ in $S_{6}$. By Proposition~\ref{pr4} the group $G$ is contained in the centralizer of the group $H$. Let $F\subset S_{6}$ be a subgroup generated by $G$ and~$H$.

 If $F$ does not contain a standard subgroup $A_{5}\subset S_{6}$ then it is not hard to check directly with computer that the group $F$ lies in one of the following subgroups: non-standard subgroup $S_{5}\subset S_{6}$, $S_{3}^{2}\rtimes C_{2}$ (which is the stabilizer of a point), $S_{4}\times C_{2}$ (which is the stabilizer of a plane on $\mathcal{S}$) or $S_{4}\times C_{2}$ (which is conjugate to the following group: $S_{4}$ acts by permutations of coordinates $x_{1}, x_{2}, x_{3}, x_{4}$ while~$C_{2}$ permutes $x_{5}$ and $x_{6}$ in standard coordinates). Consider an action of a non-standard subgroup $S_{5}\subset S_{6}$ on the set of planes on $\mathcal{S}$. By Lemma~\ref{le1} it has two orbits of length 5 and 10 respectively. The sum of all planes in the first orbit is not a $\QQ$-Cartier divisor since the group  $\Pic(X)$ is a primitive sublattice in the group $\Cl(X)$ and is generated by the class of hyperplane section while the sum of 5 planes is not a integer multiple of a hyperplane section in $\Cl(X)$. Thus in this case the $G$-variety is not $G\QQ$-factorial. In the second case the projection from a singular point which is invariant with respect to $F$ gives us an equivariant birational map to~$\PP^{3}_{\KK}$. In the third case an $F$-invariant plane is a Weil divisor which is not $\QQ$-Cartier by the same reason as in the first case. But this is impossible since we assume that the variety $X$ is $G\QQ$-factorial.

  Let us consider the forth case. Let the group $S_{4}\times C_{2}$ act on $\PP_{\overline{\KK}}^{4}$ as was described above. Then there is an $S_{4}\times C_{2}$-orbit that consists of the following planes on $\mathcal{S}$:
  $$\begin{aligned}
  & x_{1}+x_{2}=x_{3}+x_{4}=x_{5}+x_{6}=0,\\
  & x_{1}+x_{3}=x_{2}+x_{4}=x_{5}+x_{6}=0,\\
  & x_{1}+x_{4}=x_{2}+x_{3}=x_{5}+x_{6}=0.
  \end{aligned}
  $$
    They form a hyperplane section of $\mathcal{S}$ given by the equation $x_{5}+x_{6}=0$. The only common point of these three planes $p=(0:0:0:0:1:-1)$ is defined over $\KK$ and is $G$-invariant. Since the action of the group $G$ on the hyperplane is a projectivisation of a four-dimensional representation of the group $G$ and the invariant point $p$ correspondes to a one-dimensional subrepresentation, we can find a three-dimensional subrepresentation of $G$ as well. The corresponding $G$-invariant plane is defined over the base field. The projection from such a plane gives us a structure of $G$-equivariant cubic fibration. Now we can apply a $G$-equivariant resolution of singularities and relative $G$-equivariant minimal model program and obtain a $G$-birational transformation into a $G$-Mori fibration with the base of positive dimension.

 So the group $F$ contains a standard subgroup $A_{5}\subset S_{6}$, hence it is isomorphic to one one the following groups: $A_{5}$, $S_{5}$, $A_{6}$ or $S_{6}$. The groups $G$ and $H$ are normal subgroups of $F$, all elements of $G$ commute with all elements of $H$ and they generate the whole group $F$. Obviously, it is possible only in the following case: one of the groups $G$ and $F$ coincides with $F$ while the other one is trivial. If the group $G$ is trivial then the projection from any plane in $\PP^{4}$ which is defined over the base field gives us a structure of fibration by cubic surfaces. If the group~$H$ is trivial then the variety $X$ can be given by the equations~\eqref{eq} and the group~$G$ contains a standard subgroup $A_{5}\subset S_{6}$.

Conversely, assume that the form $X$ of the Segre cubic is given by the equations~\eqref{eq} and the group $G$ contains a standard subgroup $A_{5}\subset S_{6}\simeq \Aut(X)$. Then by~\cite[Lemma 4.6, Lemma 4.7]{Avi} the pair $(X, \frac{1}{\mu}\mathcal{M})$ is canonical for every $\mu$ and every movable $G$-invariant linear subsystem~\mbox{$\mathcal{M}\subset |-\mu K_{X}|$}. Hence by the Noether--Fano inequalities (see, for example,~\cite[Theorem 2.4]{Cor} and~\cite[Theorem 3.2.6]{ChS} in a $G$-equivariant situation) the variety $X$ is $G$-birationally superrigid.
\end{proof}
\begin{corollary} Among all real forms of the Segre cubic only the form of type~$\mathrm{I}$ can be $G$-birationally rigid and only if the group $G$ contains a standard subgroup~$A_{5}$. In this case it is $G$-birationally superrigid.
\end{corollary}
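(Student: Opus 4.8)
The plan is to deduce this corollary directly from the preceding theorem together with Proposition~\ref{pr3} and Proposition~\ref{pr4}. First I would recall that by Proposition~\ref{pr3} there are exactly four real forms of the Segre cubic, distinguished by the conjugacy class in $S_{6}$ of the image $H$ of the complex conjugation $\sigma$, namely the trivial permutation (type~$\mathrm{I}$) and the classes of $(1\ 2)$, $(1\ 2)(3\ 4)$ and $(1\ 2)(3\ 4)(5\ 6)$ (types~$\mathrm{II}$, $\mathrm{III}$, $\mathrm{IV}$). For a real form $X$ we have $\KK=\RR$, $\KK^{\operatorname{sep}}=\CC$, and $\Gal(\CC/\RR)\simeq C_{2}$, so the image $H\subset S_{6}$ of the Galois group is precisely the cyclic group generated by the relevant element of order $1$ or $2$.

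Next I would apply the theorem. Suppose $X$ is a real form that is $G$-birationally rigid for some $G\subset\Aut(X)$ with $\rk\Pic(X)^{G}=1$ and $X$ being $G\QQ$-factorial. The hypotheses of the theorem are satisfied over $\KK=\RR$, so the theorem forces $X$ to be given by equations~\eqref{eq}; by Remark~\ref{rem} (or directly, since equations~\eqref{eq} define the form on which $\sigma$ acts trivially on coordinates, hence trivially on the configuration) this is exactly the form of type~$\mathrm{I}$. Thus no real form of type~$\mathrm{II}$, $\mathrm{III}$ or $\mathrm{IV}$ can be $G$-birationally rigid for any such $G$, which is the first assertion. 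Moreover the theorem also gives that $G$ must contain a standard subgroup $A_{5}\subset S_{6}\simeq\Aut(X)$, which is the second assertion; and conversely, if $G$ contains such a standard $A_{5}$, the theorem's converse statement yields that $X$ (the type~$\mathrm{I}$ form, given by~\eqref{eq}) is $G$-birationally superrigid.

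I do not anticipate a genuine obstacle here: the corollary is a specialization of the theorem to $\KK=\RR$ combined with the classification of real forms already established. The only point requiring a word of care is the identification of the form given by~\eqref{eq} with the type~$\mathrm{I}$ form, i.e.\ checking that when $X$ is defined over $\RR$ by~\eqref{eq} the complex conjugation induces the identity on the $(10_{6},15_{4})$-configuration; this is immediate because $\sigma$ fixes each of the standard coordinates $x_{1},\dots,x_{6}$ and hence fixes each of the $10$ singular points and each of the $15$ planes listed in~\S 2. With that identification in hand, the statement follows, so the proof is essentially a one-line citation of the theorem plus Proposition~\ref{pr3}.
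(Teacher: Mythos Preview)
Your proposal is correct and matches the paper's intent: the corollary is stated there without proof, as an immediate specialization of the preceding theorem to $\KK=\RR$ together with the classification of real forms in Proposition~\ref{pr3}. Your only extra step---checking that the form defined by~\eqref{eq} is the type~$\mathrm{I}$ form because complex conjugation fixes the real coordinates and hence the whole $(10_{6},15_{4})$-configuration---is the right justification and is exactly what the paper leaves implicit.
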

\begin{remark} It looks like the analogous statement (at least in one direction) should be valid also for fields of characteristic $p>5$. We have a minimal model program for threefolds over algebraically closed field of characteristic $p>5$ (see~\cite{HaX}) and it should work also in relative situation with a group action over arbitrary perfect field, but it is not known to the author if it is written down -nywhere. To prove the converse statement we need an analog of the Noether--Fano inequalities in positive characteristic and the existence of such analog is also unknown to the author.
\end{remark}
\begin{remark} It is still an open question: does the birational (super)rigidity of the variety $X_{\overline{k}}=X\otimes_{\Spec k} \Spec \overline{k}$ over the field $\overline{k}$ implies the birational (super)rigidity of the variety $X$ over the field $k$ and is the same assertion true for varieties with the group action (see discussion in~\cite{Kol} and especially~\cite[Question 4]{Kol}). Such a result is valid for algebraically closed field $k$ of characteristic zero and its algebraically closed extension $K$, see~\cite[Theorem 2, Theorem 6]{Kol}.
\end{remark}

\Address
\end{document}